\newcommand{\con}{\operatorname{con}}
\newcommand{\occ}{\operatorname{occ}}
\newtheorem*{theorem}{Theorem}
\newtheorem{lemma}{Lemma}
\theoremstyle{remark}
\newtheorem{remark}{Remark}
\DeclareSymbolFont{rsfscript}{OMS}{rsfs}{m}{n}
\DeclareSymbolFontAlphabet{\mathrsfs}{rsfscript}
\begin{document}

\title{Identity bases for finite cyclic semigroups}

\author[M. V. Volkov]{Mikhail V. Volkov}
\address{{\normalfont 620075 Ekaterinburg, Russia}}
\email{m.v.volkov@urfu.ru}

\date{}

\thanks{Supported by the Ministry of Science and Higher Education of the Russian Federation, project FEUZ-2023-2022}

\subjclass[2020]{20M05, 20M14}
\keywords{Cyclic semigroup, Identity, Identity basis}

\begin{abstract}
We provide explicit identity bases for finite cyclic semigroups.
\end{abstract}

\maketitle

\subsection*{Background} The result reported in the present note was found in 1984, when I was collecting material for the forthcoming survey paper on semigroup identities~\cite{ShVo85}. One chapter of that survey was planned as a catalogue of identity bases for concrete ``interesting'' semigroups. To my surprise, I could not find any information in the literature on what I thought to be the simplest case---finite cyclic semigroups (apart from finite cyclic groups and cyclic semigroups with at most four elements). This looked like a gap worth filling, and I tried to do so, but the task turned out to be far from an easy exercise. A solution came after I spotted a key ingredient in a then-recent paper by Lyapin~\cite{Lyapin:1979}; see Lemma~\ref{lem:lyapin} below. I had the opportunity to mention this to Lyapin during the conference on the occasion of his 70th birthday in September 1984.

The result subsequently appeared in~\cite{ShVo85} as Proposition~21.3---and has since been used in the literature; see \cite[Proposition 5.8]{LRS:2019},  for instance---but its proof has not been published until now. The present collection of papers dedicated to Lyapin's memory seems to be an appropriate venue for this note, built upon his idea.

\subsection*{Cyclic semigroups} A semigroup is called \emph{cyclic} (or \emph{monogenic}) if it can be generated by a single element. If a cyclic semigroup is infinite, then it is isomorphic to the semigroup of positive integers under addition. Every finite cyclic semigroup is uniquely determined by two positive integers---index $h$ and period $d$---via the following presentation:
\[
C_{h,d}: = \langle a\mid a^{\,h} = a^{\,h+d}\rangle.
\]
See \cite[Section 1.6]{ClPr61} for details and historical information.

The Cayley graph of the semigroup  $C_{h,d}$ with respect to its generator $a$ is shown in Fig.~\ref{fig:Cayley}. The graph provides a compact depiction of the structure of $C_{h,d}$. If one thinks of the graph as a frying pan, the $d$-element base $\{a^{\,h+1},a^{\,h+2},\dots,a^{\,h+d}\}$ forms an ideal in $C_{h,d}$, which is isomorphic to the cyclic group $C_{1,d}$. The $h$-element handle $\{a,a^2,\dots,a^{\,h}\}$ corresponds to the Rees quotient of the semigroup $C_{h,d}$ over this ideal; this quotient is isomorphic to  the nilpotent cyclic semigroup $C_{h,1}$. It is well known (and easy to verify) that the semigroup $C_{h,d}$ is a subdirect product of $C_{h,1}$ and $C_{1,d}$.
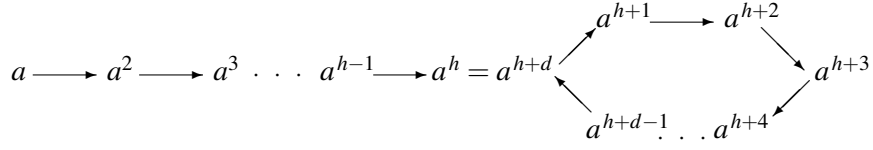
\begin{figure}[ht]
\begin{center}
\unitlength=1.4mm
\begin{picture}(90,13)(0,2)
\put(6,8.5){$a$}
\put(15,8.5){$a^2$}
\put(25,8.5){$a^3$}
\put(8,9){\vector(1,0){6}}
\put(18,9){\vector(1,0){6}}
\multiput(29,9)(2,0){3}{\circle*{0.2}}
\put(35,8.5){$a^{\,h-1}$}
\put(45.5,8.5){$a^{\,h}=a^{\,h+d}$}
\put(40,9){\vector(1,0){5}}
\put(57.5,10){\vector(1,1){3.4}}
\put(66,14){\vector(1,0){6}}
\put(76.5,13.5){\vector(1,-1){4}}
\put(81,8.5){\vector(-1,-1){3.4}}
\multiput(67,3)(2,0){3}{\circle*{0.2}}
\put(60.5,5.5){\vector(-1,1){3.4}}
\put(60,3){$a^{\,h+d-1}$}
\put(61,13.5){$a^{\,h+1}$}
\put(73,13.5){$a^{\,h+2}$}
\put(81.5,8.5){$a^{\,h+3}$}
\put(72,3){$a^{\,h+4}$}
\end{picture}
\end{center}
\caption{The Cayley graph of the semigroup $C_{h,d}$}\label{fig:Cayley}
\end{figure}

\subsection*{Words and identities} We fix a countably infinite set $X:=\{x,y,x_1,x_2,\dots\}$; its elements are called \emph{letters}. A \emph{word} is a finite sequence of letters. If $w$ is a word, the set of distinct letters occurring in $w$ is called the \emph{content} of $w$ and is denoted $\con(w)$. For a letter $x$ and a word $w$, we denote by $\occ(x,w)$ the number of its occurrences in $w$, that is, if $w=x_1\cdots x_\ell$, where $x_1,\dots,x_\ell$ are letters, then $\occ(x,w)$ is the cardinality of the set $\{i\mid x_i=x\}$. The number $|w|:=\sum_{x\in\con(w)}\occ(x,w)$ is called the \emph{length} of $w$. We allow the \emph{empty word}, which has empty content and length 0. In particular, a power of a letter with exponent 0 is understood as the empty word.

Any map $\varphi\colon X\to S$, where $S$ is semigroup is called a \emph{substitution}. The \emph{value} $w\varphi$ of a non-empty word $w=x_1\cdots x_\ell$ under $\varphi$ is defined as $x_1\varphi\cdots x_\ell\varphi$.

An \emph{identity} is an expression of the form $u=v$, where $u$ and $v$ are non-empty words. A semigroup $S$ \emph{satisfies} $u=v$ (or $u=v$ \emph{holds} in $S$) if $u\varphi=v\varphi$ for every substitution $\varphi\colon X\to S$. That is, each substitution of elements from $S$ for the letters occurring in $u$ or $v$ yields equal values for these words.

Given a system $\Sigma$ of identities, one can deduce its \emph{consequences} by repeatedly applying the following rules: substituting a word for each occurrence of a variable in an identity, multiplying an identity on the left or right by a word, and using the symmetry and transitivity of equality. Two identities are said to be \emph{equivalent modulo} $\Sigma$ if each of them is a consequence of the other together with $\Sigma$.

A system $\Sigma$ of identities holding in a semigroup $S$ is called an \emph{identity basis} for~$S$ if every identity satisfied by $S$ is a consequence of $\Sigma$.

A letter $x$ is said to be \emph{balanced} in an identity $u=v$ if $\occ(x,u)=\occ(x,v)$; otherwise, $x$ is \emph{unbalanced} in $u=v$. An identity $u=v$ is \emph{balanced} if every letter is balanced in it and \emph{non-balanced} if it has an unbalanced letter. It is well known (and easy to verify) that an identity is balanced if and only if it is a consequence of the commutative law
\begin{equation}
\label{eq:com}
xy = yx.
\end{equation}
The law \eqref{eq:com} forms an identity basis for the infinite cyclic semigroup.

\subsection*{Identities of finite cyclic semigroups}
As every cyclic semigroup is commutative, it satisfies all balanced identities. Therefore, we restrict our attention to the non-balanced identities of finite cyclic semigroups. The following result, due to Lyapin, classifies such identities in a crucial special case.

\begin{lemma}[{\!\cite[Lemma 17]{Lyapin:1979}}]\label{lem:lyapin}
A non-balanced identity $u=v$ holds in the nilpotent cyclic semigroup $C_{h,1}$ if and only if either \emph{(a)} $|u|,|v|\ge h$, or \emph{(b)} $\con(u)=\con(v)$ and $|u|=|v|\ge h-\min_x\{\occ(x,u),\occ(x,v)\mid \occ(x,u)\ne\occ(x,v)\}$.
\end{lemma}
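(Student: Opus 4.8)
The plan is to exploit the transparent arithmetic of $C_{h,1}$. Its elements are $a,a^2,\dots,a^h$ with $a^ia^j=a^{\min(i+j,h)}$, so $a^h$ is a zero and $a^i=a^j$ holds precisely when $i=j$ or $i,j\ge h$. A substitution $\varphi$ into $C_{h,1}$ amounts to a choice of integers $n_x\ge1$ with $x\varphi=a^{n_x}$, and, $C_{h,1}$ being commutative, each word $w$ evaluates to
\[
w\varphi=a^{\min(N(w),\,h)},\qquad N(w):=\sum_{x}\occ(x,w)\,n_x .
\]
Thus the first step is the reformulation: $u=v$ holds in $C_{h,1}$ if and only if there is no choice of the $n_x\ge1$ with $N(u)\ne N(v)$ and $\min(N(u),N(v))<h$. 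I will call a choice with $N(u)\ne N(v)$ \emph{distinguishing}, and a distinguishing choice with $\min(N(u),N(v))<h$ a \emph{failure witness}; so $u=v$ holds iff no failure witness exists. Since $n_x\ge1$ forces $N(w)\ge|w|$, sufficiency of (a) is immediate: if $|u|,|v|\ge h$ then always $N(u),N(v)\ge h$, so no witness can exist.

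Next I would dispose of the case $|u|\ne|v|$. The all-ones substitution gives $N(u)=|u|$ and $N(v)=|v|$, which are distinct with minimum $\min(|u|,|v|)$; as every substitution satisfies $N(w)\ge|w|$, this is the least value of $\min(N(u),N(v))$ attainable over all distinguishing substitutions. Hence, when the lengths differ, $u=v$ holds if and only if $\min(|u|,|v|)\ge h$, that is, exactly when (a) holds, while (b) is impossible. This reduces everything to the case $|u|=|v|=:L$.

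Within $|u|=|v|=L$ I would first treat $\con(u)\ne\con(v)$. If $L\ge h$ then (a) holds; if $L<h$ I produce a witness: pick a letter $x_0$ in, say, $\con(v)\setminus\con(u)$, so that $\occ(x_0,u)=0$. Then raising $n_{x_0}$ leaves $N(u)=L$ fixed while eventually forcing $N(u)<N(v)$, giving a distinguishing substitution with $\min(N(u),N(v))=L<h$; the symmetric choice handles a letter of $\con(u)\setminus\con(v)$. So in the content-unequal case $u=v$ holds iff (a), and again (b) cannot hold.

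The crux is the remaining case $|u|=|v|=L$ and $\con(u)=\con(v)$, where I would compute the minimum $M$ of $\min(N(u),N(v))$ over all distinguishing substitutions and show $u=v$ holds iff $M\ge h$. The cheapest way to force $N(u)<N(v)$ is a one-step perturbation of the all-ones substitution: raise by $1$ a single letter $x$ with $\occ(x,u)<\occ(x,v)$, at cost $\occ(x,u)$ to $N(u)$. A short counting estimate on $\sum_x\occ(x,u)(n_x-1)$ shows this is optimal, giving $M=L+m$ with $m$ the quantity in the statement; symmetric reasoning covers $N(v)<N(u)$. The identity then holds iff $L+m\ge h$, which is exactly (b). I expect the main obstacle to be this optimality estimate together with the bookkeeping that identifies $m$ with $\min_x\{\occ(x,u),\occ(x,v)\mid\occ(x,u)\ne\occ(x,v)\}$ — here $\con(u)=\con(v)$ is essential, since it guarantees that every unbalanced letter occurs in both words — and the observation that, because $L=|u|=|v|$ forces both an unbalanced letter with $\occ(x,u)>\occ(x,v)$ and one with the reverse inequality, both perturbation directions are genuinely available, so that a non-balanced identity really is distinguished by some substitution.
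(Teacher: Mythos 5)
There is nothing to compare your argument against: the paper does not prove this lemma at all, but quotes it as Lemma 17 of Lyapin's 1979 paper and uses it as a black box. Judged on its own, your proof is correct. The arithmetic reduction is exactly right: in $C_{h,1}$ one has $a^{i}a^{j}=a^{\min(i+j,h)}$, so a substitution $x\mapsto a^{n_x}$ ($n_x\ge 1$) evaluates a word $w$ to $a^{\min(N(w),h)}$, and $u=v$ holds iff no substitution gives $N(u)\ne N(v)$ together with $\min(N(u),N(v))<h$. Your three cases (lengths differ; lengths equal but contents differ; lengths and contents both equal) are exhaustive, and in the first two the all-ones substitution, respectively the one-letter blow-up, correctly produces a failure witness exactly when $\min(|u|,|v|)<h$. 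The only step you leave compressed is the optimality estimate in the crux case, but it does go through in one line: writing $n_x=1+e_x$, any distinguishing substitution must have $e_x\ge 1$ for some unbalanced letter $x$, since balanced letters cancel in $N(u)-N(v)$; and by the definition of $m:=\min_x\{\occ(x,u),\occ(x,v)\mid\occ(x,u)\ne\occ(x,v)\}$ every unbalanced letter satisfies $\occ(x,u)\ge m$ and $\occ(x,v)\ge m$, whence $N(u),N(v)\ge L+m$. Combined with your one-step perturbation this yields $M=L+m$, so the identity holds iff $|u|=|v|\ge h-m$, which is condition (b); and (a) is subsumed in this case because $\con(u)=\con(v)$ forces $m\ge 1$. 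What your route buys is self-containedness: it replaces the citation to a source that may be hard to access with two paragraphs of elementary counting, and as a by-product it exhibits the exact minimum $M=L+m$ of $\min(N(u),N(v))$ over distinguishing substitutions, which is slightly more information than the lemma itself records.
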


For future use, we call the non-balanced identities of type (a) \emph{long} and those of type (b) \emph{uniform}.

We call $u=v$ a $d$-\emph{balanced} identity if $\occ(x,u)\equiv\occ(x,v)\pmod{d}$ for every letter $x$. The next result is a part of semigroup folklore,

\begin{lemma}\label{lem:group}
An identity holds in the cyclic group $C_{1,d}$ if and only if it is $d$-balanced.
\end{lemma}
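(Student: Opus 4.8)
The plan is to prove Lemma~\ref{lem:group} by showing that satisfaction in $C_{1,d}$ is governed entirely by the residues modulo $d$ of the letter-occurrence counts. The cyclic group $C_{1,d}=\langle a\mid a=a^{1+d}\rangle$ is, up to isomorphism, the additive group $\mathbb{Z}/d\mathbb{Z}$, with the generator $a$ corresponding to the residue $1$. The key observation is that for any word $w=x_1\cdots x_\ell$ and any substitution $\varphi\colon X\to C_{1,d}$ sending each letter $x$ to $a^{n_x}$ (where we may take $n_x\in\{1,\dots,d\}$, since every element of $C_{1,d}$ is such a power), the value $w\varphi$ equals $a^{N}$ where $N=\sum_{x}\occ(x,w)\,n_x$, and since exponents are read modulo $d$ in $C_{1,d}$, only the residue $N\bmod d$ matters.

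First I would establish the sufficiency direction. Suppose $u=v$ is $d$-balanced, so $\occ(x,u)\equiv\occ(x,v)\pmod d$ for every letter $x$. Fix an arbitrary substitution $\varphi$ with $x\varphi=a^{n_x}$. Then $u\varphi=a^{\sum_x\occ(x,u)\,n_x}$ and $v\varphi=a^{\sum_x\occ(x,v)\,n_x}$, and the difference of the two exponents is $\sum_x\bigl(\occ(x,u)-\occ(x,v)\bigr)n_x$, which is a sum of multiples of $d$ and hence $\equiv 0\pmod d$. Because powers of $a$ with exponents congruent modulo $d$ coincide in $C_{1,d}$, we conclude $u\varphi=v\varphi$. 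As $\varphi$ was arbitrary, $u=v$ holds in $C_{1,d}$.

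For the necessity direction, I would argue contrapositively: if $u=v$ is not $d$-balanced, then some letter $x$ satisfies $\occ(x,u)\not\equiv\occ(x,v)\pmod d$. The idea is to construct a single substitution separating the two sides. The cleanest choice is to map $x$ to $a$ and every other letter to the identity-like behaviour obtained by sending it to $a^{d}$ (so that its contribution to the exponent is a multiple of $d$ and thus vanishes modulo $d$); more carefully, one sends every letter other than $x$ to a power $a^{d}$, whose occurrences contribute multiples of $d$. Under this substitution $u\varphi=a^{\occ(x,u)+d\cdot(\ldots)}=a^{\occ(x,u)}$ and $v\varphi=a^{\occ(x,v)}$ as elements of $C_{1,d}$, and since $\occ(x,u)\not\equiv\occ(x,v)\pmod d$ these are distinct. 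Hence $u=v$ fails in $C_{1,d}$.

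The argument is essentially routine once the correspondence $C_{1,d}\cong\mathbb{Z}/d\mathbb{Z}$ and the additive evaluation formula are in place; I do not anticipate a genuine obstacle. The one point requiring a small amount of care is the necessity direction, where the separating substitution must be chosen so that the contributions of all letters \emph{other} than the distinguishing letter $x$ cancel modulo $d$: sending them to $a^d$ accomplishes this cleanly, but one must check the edge case in which $x$ does not occur in one of the two words (then the relevant occurrence count is $0$, and the inequality modulo $d$ still forces distinct values). With that handled, both implications follow directly.
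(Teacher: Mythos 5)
Your proof is correct; note that the paper itself offers no proof of this lemma, stating it as semigroup folklore. Your argument---identifying $C_{1,d}$ with $\mathbb{Z}/d\mathbb{Z}$, observing that the value of a word under a substitution depends only on the occurrence counts of its letters modulo $d$, and, for necessity, separating a non-$d$-balanced identity by sending the unbalanced letter to the generator $a$ and every other letter to the identity element $a^{\,d}$---is exactly the standard argument one would supply, and you correctly handle the edge case where the unbalanced letter is missing from one side (its count $0$ still yields the identity element, so the two values remain distinct).
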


Combining Lemmas~\ref{lem:lyapin} and~\ref{lem:group}, we classify non-balanced identities of arbitrary finite cyclic semigroups.

\begin{lemma}\label{lem:combine}
\emph{(i)} A non-balanced identity $u=v$  holds in the semigroup $C_{h,d}$ with $h>d+2$ if and only if it is $d$-balanced and either long or uniform.

\emph{(ii)} A non-balanced identity $u=v$ holds in the semigroup $C_{h,d}$ with $h\le d+2$ if and only if it is $d$-balanced and long.
\end{lemma}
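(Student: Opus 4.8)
The plan is to exploit the subdirect decomposition recalled above. Since $C_{h,d}$ is a subdirect product of $C_{h,1}$ and $C_{1,d}$, an identity holds in $C_{h,d}$ if and only if it holds in both factors: both projections $C_{h,d}\to C_{h,1}$ and $C_{h,d}\to C_{1,d}$ are surjective, so each factor is a homomorphic image of $C_{h,d}$ and hence inherits every identity of $C_{h,d}$, while conversely $C_{h,d}$ embeds into the direct product $C_{h,1}\times C_{1,d}$, which satisfies every identity common to the two factors. Applying Lemmas~\ref{lem:lyapin} and~\ref{lem:group} to a non-balanced identity $u=v$, I conclude that $C_{h,d}$ satisfies $u=v$ if and only if $u=v$ is $d$-balanced (from the factor $C_{1,d}$) and either long or uniform (from the factor $C_{h,1}$). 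This equivalence is valid for all $h$ and $d$, and it yields part~(i) at once.

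For part~(ii) it then suffices to show that when $h\le d+2$ the option ``uniform'' is redundant, i.e.\ every non-balanced, $d$-balanced, uniform identity is in fact long. I would argue by contradiction: suppose $u=v$ is non-balanced, $d$-balanced, and uniform but not long. Put $L:=|u|=|v|$ (equal by uniformity); ``not long'' means $L<h$, while uniformity gives $L\ge h-m$, where $m$ is the minimum appearing in Lemma~\ref{lem:lyapin}(b), so $m\ge 1$. Choose an unbalanced letter $x_0$ attaining this minimum, say $\occ(x_0,u)=m<\occ(x_0,v)$ (the opposite case being symmetric). Because $u=v$ is $d$-balanced, the two counts are congruent modulo $d$, whence $\occ(x_0,v)\ge m+d$; since $\occ(x_0,v)\le L$, this forces $1+d\le m+d\le L\le h-1$, so $h\ge d+2$.

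Combined with the hypothesis $h\le d+2$, this pins down $h=d+2$, $m=1$, and $L=d+1$, and then the chain of inequalities collapses to $\occ(x_0,v)=L$, so that $v=x_0^{d+1}$ is a power of a single letter. Uniformity now gives $\con(u)=\con(v)=\{x_0\}$, so $u=x_0^{d+1}$ as well, and therefore $\occ(x_0,u)=d+1\ne 1=m$, contradicting the choice of $x_0$. Hence no such identity exists, ``uniform'' indeed collapses into ``long'' in this range, and part~(ii) follows.

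I expect the boundary case $h=d+2$ to be the only real obstacle. The crude length estimate rules out $h\le d+1$ immediately, but the value $h=d+2$ survives until one observes that saturating the inequality $\occ(x_0,v)\le L$ turns $v$ into a pure power; uniformity then propagates this to $u$ and contradicts the minimality of $m$. Everything else is a direct bookkeeping of occurrence counts together with the two quoted lemmas.
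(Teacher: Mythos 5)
Your proof is correct and takes essentially the same route as the paper: the subdirect decomposition of $C_{h,d}$ into $C_{h,1}$ and $C_{1,d}$ combined with Lemmas~\ref{lem:lyapin} and~\ref{lem:group} yields part~(i) and the ``if'' half of part~(ii), and everything then reduces to showing that for $h\le d+2$ every $d$-balanced uniform identity is long. The only difference is tactical: where you argue by contradiction, pinning down the boundary case $h=d+2$, $m=1$, $L=d+1$ and forcing $v$ (hence $u$) to be a pure power of $x_0$, the paper argues directly that any unbalanced letter occurs at least $d+1$ times in one of the two words, which must also contain some other letter, so $|u|=|v|\ge d+2\ge h$.
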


\begin{proof}
As mentioned, the semigroup $C_{h,d}$ is a subdirect product of $C_{h,1}$ and $C_{1,d}$. An identity holds in a subdirect product if and only if it holds in each factor. Therefore, a non-balanced identity $u=v$ holds in $C_{h,d}$ if and only if it simultaneously satisfies the conditions in Lemmas~\ref{lem:lyapin} and~\ref{lem:group}, that is, the identity is $d$-balanced and either long or uniform. This yields claim (i) and the ``if'' part of claim (ii). For the ``only if'' part of claim (ii), we prove that if $h\le d+2$, then every $d$-balanced uniform identity $u=v$ is long so the second alternative  in ``either long or uniform'' is subsumed by the first.

The uniformity of $u=v$ implies $\con(u)=\con(v)$ and $|u|=|v|$. Let $x$ be an unbalanced letter in $u=v$. Then the non-negative integers $\occ(x,u)$ and $\occ(x,v)$ are distinct whence at least one of them is not 0. The equality $\con(u)=\con(v)$ rules out the case where one of these integers is 0 and the other is not. Hence, $\occ(x,u),\occ(x,v)\ge 1$. Because $u=v$ is $d$-balanced, we have $\occ(x,u)\equiv\occ(x,v)\pmod{d}$, whence
\[
\max\{\occ(x,u),\occ(x,v)\}\ge d+1.
\]
The equality $|u|=|v|$ rules out the case $\con(u)=\con(v)=\{x\}$ (otherwise $|u|=\occ(x,u)\ne\occ(x,v)=|v|$). Thus, each of the words $u$ and $v$ contains at least one occurrence of a letter different from $x$. Consequently, the word with more $x$'s has length at least $(d+1)+1=d+2$. Since $|u|=|v|$, we obtain $|u|,|v|\ge d+2\ge h$, showing that the identity $u=v$ is long.
\end{proof}

\subsection*{Identity bases of finite cyclic semigroups}
\begin{theorem}
An identity basis for the semigroup $C_{h,d}$ is formed by the identities \eqref{eq:com} and
\[
\Phi_{h,d}:\quad x^{\,d}x_1\cdots x_h = x_1\cdots x_h,
\]
if $h\le d+2$; otherwise, if $h>d+2$, it is formed by the identities \eqref{eq:com}, $\Phi_{h,d}$, and
\[
\Psi_{h,d,r}:\quad x^{\,r}y^{\,r+d}x_1\cdots x_{h-3r-d} = x^{\,r+d}y^{\,r}x_1\cdots x_{h-3r-d},\ \text{ for each $r=1,\dots, \left\lfloor\dfrac{h-d}3\right\rfloor$}.
\]
\end{theorem}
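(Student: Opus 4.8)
The plan is to show that the listed identities form an identity basis by proving two things: first, that they all hold in $C_{h,d}$, and second, that every identity satisfied by $C_{h,d}$ is a consequence of them. The first task is routine verification using Lemma~\ref{lem:combine}: the commutative law \eqref{eq:com} holds because $C_{h,d}$ is commutative; the identity $\Phi_{h,d}$ is $d$-balanced (the letter $x$ occurs $d$ times more often on the left, and $d\equiv 0\pmod d$) and long (both sides have length $\ge h$, since the right side already has length $h$); and each $\Psi_{h,d,r}$ is $d$-balanced (for $x$ and $y$ the occurrence counts differ by $d$) and uniform (both sides have the same content and length $h-r-d$, wait---let me recompute; in any case one checks the uniformity bound of Lemma~\ref{lem:lyapin}(b) directly), so by Lemma~\ref{lem:combine} they hold in $C_{h,d}$.

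The substance is the converse: given a non-balanced identity $u=v$ holding in $C_{h,d}$, I would derive it from the listed identities together with \eqref{eq:com}. I would split according to the two cases of Lemma~\ref{lem:combine}. Because the commutative law is always available, I may freely rearrange letters, so up to consequence every word is determined by its content together with the multiset of exponents. The key reduction is this: using \eqref{eq:com} to sort letters, I want to use $\Phi_{h,d}$ to \emph{absorb} surplus occurrences. Concretely, $\Phi_{h,d}$ lets me delete a block $x^{\,d}$ from any word of length at least $h$ (after substituting and multiplying out), and conversely insert one; this is the engine for handling the long case. For the uniform case (only relevant when $h>d+2$), the identities $\Psi_{h,d,r}$ provide the finer moves that exchange exponents $r\leftrightarrow r+d$ between two letters while preserving total length, which is exactly the invariant characterizing uniform identities.

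The heart of the argument, and the step I expect to be the main obstacle, is showing that these generators suffice---that any two words of the same content which agree in the sense required by Lemma~\ref{lem:combine} can be connected by a chain of applications of \eqref{eq:com}, $\Phi_{h,d}$, and the $\Psi_{h,d,r}$. For the long case the plan is to show that every word of length $\ge h$ is equivalent, modulo the commutative law and $\Phi_{h,d}$, to a canonical normal form in which each exponent is reduced modulo $d$ down to a bounded range while keeping the length at least $h$; two long words are then a common consequence iff they have the same content and the same $d$-reduced exponents, which is precisely the $d$-balanced condition. For the uniform case one argues that any two uniform $d$-balanced words with equal length and content differ by a sequence of pairwise exponent-swaps of the form realized by some $\Psi_{h,d,r}$, the range $r=1,\dots,\lfloor (h-d)/3\rfloor$ being exactly what is needed so that the auxiliary tail $x_1\cdots x_{h-3r-d}$ has non-negative length and the total length matches the uniformity threshold. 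The delicate points will be verifying that the normal-form reduction always stays within the length bound $h$ (so that $\Phi_{h,d}$ remains applicable) and that the finite family of swaps $\Psi_{h,d,r}$ generates all exponent rearrangements compatible with the constraints; I would handle these by an induction on total length for the long case and an induction on the number of unbalanced letters, or on the total discrepancy $\sum_x|\occ(x,u)-\occ(x,v)|$, for the uniform case.
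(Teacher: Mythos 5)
Your overall architecture (soundness via Lemma~\ref{lem:combine}, then completeness split into the long and uniform cases, with \eqref{eq:com} used to rearrange letters freely) matches the paper's, but both of your key completeness arguments have genuine gaps. In the long case, the normal-form claim is false as stated. First, the invariant is wrong: a $d$-balanced long identity need not have equal contents --- for instance $x^{\,d}y^{\,h}=y^{\,h}$ holds in $C_{h,d}$ --- so ``same content and same $d$-reduced exponents'' cannot be the criterion. Second, the reduction you describe does not exist: in $C_{5,2}$ the word $x^2y^2z^2$ has every exponent divisible by $d=2$, yet no $d$-block can be deleted by $\Phi_{5,2}$, because the remainder would have length $4<5=h$; moreover the identity $x^2y^2z^2=y^2z^2w^2$ holds in $C_{5,2}$ (it is $d$-balanced and both sides are long) although both sides are irreducible under your moves and are distinct. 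So the rewriting is not confluent, a normal form ``with each exponent reduced mod $d$ and length $\ge h$'' need not exist, and distinct irreducible words must still be proved equal. The missing idea is a move exchanging a $d$-block of one letter for a $d$-block of another: the paper derives $y^{\,d}x_1\cdots x_{h-d}=x^{\,d}x_1\cdots x_{h-d}$ (and $x^{\,d}=y^{\,d}$ when $h\le d$) from \eqref{eq:com} and $\Phi_{h,d}$ in Remark~\ref{rem:redundancy}, and its Case~1 is an induction on the number of unbalanced letters powered by exactly these identities. (Alternatively, since you do allow insertions as well as deletions, you could merge upward to a common word with $\occ(x,\cdot)=\max\{\occ(x,u),\occ(x,v)\}$ for each letter $x$; but that is not the argument you wrote.)

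In the uniform case you simply assert the crux: that the swaps $\Psi_{h,d,r}$ with $r\le\lfloor(h-d)/3\rfloor$ generate every needed exchange, the range being ``exactly what is needed.'' This is precisely what requires proof, and it is delicate, because the swap naturally called for can sit at a level beyond that range. In $C_{11,1}$ the identity $x^4y^5=x^5y^4$ holds (it is uniform: $|u|=|v|=9\ge 11-4$) and is not long, yet $\lfloor(h-d)/3\rfloor=3$, so there is no $\Psi_{11,1,4}$ in the basis; the derivation must instead go through $\Psi_{11,1,3}\colon x^3y^4x_1=x^4y^3x_1$ by substituting $y$ for the tail variable $x_1$ and then multiplying on the left by $x$. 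That is, surplus occurrences of $x$ and $y$ themselves must be fed into the ``auxiliary'' tail variables; your proposal contains no such mechanism, and neither of the inductions you suggest produces it by itself. (This point is slippery enough that the paper's own Case~2, which applies $\Psi_{h,d,r}$ with $r$ equal to the minimal unbalanced occurrence count, silently needs the same repair whenever that $r$ exceeds $\lfloor(h-d)/3\rfloor$.) A final small slip in your soundness check: the two sides of $\Psi_{h,d,r}$ have length $h-r$, not $h-r-d$, which attains the bound of Lemma~\ref{lem:lyapin}(b) with equality.
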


\begin{remark}
\label{rem:redundancy}
The identity basis for the semigroup $C_{h,d}$ given in \cite[Proposition~21.3]{ShVo85} includes two additional identities:
\begin{gather}
\label{eq:equalpower}
x^{\,d}=y^{\,d}\ \text{ for }\ h\le d,\\
\label{eq:equalpowerwithtail}
y^{\,d}x_1\cdots x_{h-d}=x^{\,d}x_1\cdots x_{h-d}\ \text{ for }\ h>d.
\end{gather}
We now show that both \eqref{eq:equalpower} and \eqref{eq:equalpowerwithtail} can be deduced from \eqref{eq:com} and $\Phi_{h,d}$, and hence, their inclusion in the basis for $C_{h,d}$ was redundant.

To deduce \eqref{eq:equalpower}, substitute the letter $y$ for each of $x_1,\dots,x_h$ in $\Phi_{h,d}$ and then multiply the resulting identity on the right by $y^{\,d-h}$ (which is legitimate since $h\le d$). This yields $x^{\,d}y^{\,d}=y^{\,d}$. Interchanging $x$ and $y$ gives $y^{\,d}x^{\,d}=x^{\,d}$, while the commutative law~\eqref{eq:com} ensures $x^{\,d}y^{\,d}=y^{\,d}x^{\,d}$. Hence $x^{\,d}=y^{\,d}$ by transitivity.

To deduce \eqref{eq:equalpowerwithtail}, make the following substitution in $\Phi_{h,d}$:
\[
x\mapsto x,\quad x_i\mapsto\begin{cases}
y&\text{for }\ i=1,\dots,d,\\
x_{i-d} &\text{for }\ i=d+1,\dots,h.
\end{cases}
\]
The substitution yields the identity $x^{\,d}y^{\,d}x_1\cdots x_{h-d}=y^{\,d}x_1\cdots x_{h-d}$. Interchanging $x$ and $y$ gives $y^{\,d}x^{\,d}x_1\cdots x_{h-d}=x^{\,d}x_1\cdots x_{h-d}$. Since  $x^{\,d}y^{\,d}=y^{\,d}x^{\,d}$ by ~\eqref{eq:com}, we obtain $y^{\,d}x_1\cdots x_{h-d}=x^{\,d}x_1\cdots x_{h-d}$ by transitivity.
\end{remark}

\begin{proof}[Proof of the theorem]
By Lemma~\ref{lem:combine}, the identities from the statement hold in the semigroup $C_{h,d}$ for appropriate $h$ and $d$. Indeed, each of these identities is $d$-balanced. The identity $\Phi_{h,d}$ is long. If $h>d+2$, then the identity $\Psi_{h,d,r}$ is uniform.

Now we show that every identity $u=v$ holding in $C_{h,d}$ follows from the identities~\eqref{eq:com}, $\Phi_{h,d}$, and $\Psi_{h,d,r}$ if $h>d+2$, and from the identities \eqref{eq:com} and $\Phi_{h,d}$ if $h\le d+2$.

We proceed by induction on the number of unbalanced letters in $u=v$. If this number is $0$, then the identity $u=v$ is balanced and follows from~\eqref{eq:com}. Now suppose that $u=v$ is non-balanced. By Lemma~\ref{lem:combine}, the identity is $d$-balanced and either long or uniform, with the latter occurring only when $h>d+2$.

\smallskip

\noindent\emph{\textbf{Case 1:}} the identity $u=v$ is $d$-balanced and long.

First assume that there is a unique unbalanced letter $x$ in $u=v$ so that $\occ(y,u)=\occ(y,v)$ for all $y\ne x$. By symmetry, we may assume that $s:=\occ(x,u)>t:=\occ(x,v)$. Since $s\equiv t\pmod{d}$, we can write $s=t+kd$ for some $k>0$. Using commutativity, we can permute letters in each of the words $u$ and $v$ to obtain an identity of the form $(x^{\,k})^{\,d}x^{\,t}w=x^{\,t}w$, which is equivalent to the identity $u=v$ modulo \eqref{eq:com}. Since permuting letters does not change word length, we have $|x^{\,t}w|=|v|\ge h$. Therefore, the identity $(x^{\,k})^{\,d}x^{\,t}w=x^{\,t}w$ is a consequence of $\Phi_{h,d}$: we obtain the former from the latter by first applying the substitution:
\begin{align*}
x&\mapsto x^{\,k},\\
x_i&\mapsto\text{the $i$-th letter of the word $x^{\,t}w$},\quad i=1,\dots,h,
\end{align*}
and then multiplying both sides of the resulting identity on the right by the suffix of $x^{\,t}w$ that follows the first $h$ letters of this word, that is, by the suffix of length $|x^{\,t}w|-h$. Thus, the identity $u=v$ is a consequence of $\Phi_{h,d}$ and \eqref{eq:com}.

Now let $x,y$ be any two distinct unbalanced letters in $u=v$. By symmetry, we may assume that $p:=\occ(y,u)>q:=\occ(y,v)$. Since $p\equiv q\pmod{d}$ and $p>q\ge 0$, we have $p\ge d$. Using commutativity, we can permute letters in each of the words $u$ and $v$ to rewrite the identity in the form
\[
y^{\,d}y^{\,p-d}u'=y^{\,q}v'.
\]
If $h\le d$, we use the identity \eqref{eq:equalpower} (which follows from~\eqref{eq:com} and $\Phi_{h,d}$; see Remark~\ref{rem:redundancy}) to convert the prefix $y^{\,d}$ of $y^{\,d}y^{\,p-d}u'$ into $x^{\,d}$. Thus, we obtain that the identity
\begin{equation}\label{eq:case1step1}
y^{\,p-d}x^{\,d}u'=y^{\,q}v'
\end{equation}
is equivalent to the identity $u=v$ modulo $\Phi_{h,d}$ and \eqref{eq:com}. If $h>d$, we obtain the same identity \eqref{eq:case1step1} by applying the identity \eqref{eq:equalpowerwithtail} (which also follows from~\eqref{eq:com} and $\Phi_{h,d}$; see Remark~\ref{rem:redundancy}) to the word $y^{\,d}y^{\,p-d}u'$, which we can do since $|y^{\,p-d}u'|=|u|-d\ge h-d$.

If $p-d=q$, the letter $y$ becomes balanced in the identity \eqref{eq:case1step1} and so do all letters that were already balanced in $u=v$. Thus, \eqref{eq:case1step1} has fewer unbalanced letters and, by the induction assumption, it follows from the identities \eqref{eq:com}, $\Phi_{h,d}$, and $\Psi_{h,d,r}$ if $h>d+2$, and from the identities \eqref{eq:com} and $\Phi_{h,d}$ if $h\le d+2$.

If $p-d>q$, we can re-use \eqref{eq:equalpower} (if $h\le d$) or \eqref{eq:equalpowerwithtail} (if $h>d$) to obtain the identity
\[
y^{\,p-2d}x^{\,2d}u'=y^{\,q}v',
\]
still equivalent to $u=v$ modulo $\Phi_{h,d}$ and \eqref{eq:com}. Repeating this transformation, we eventually reach an identity equivalent to $u=v$ modulo $\Phi_{h,d}$ and \eqref{eq:com} but having fewer unbalanced letters, and hence, falling under the induction assumption.

\smallskip

\noindent\emph{\textbf{Case 2:}} $h>d+2$, and the identity $u=v$ is $d$-balanced and uniform.

The uniformity of $u=v$ implies $\con(u)=\con(v)$ and $|u|=|v|$. If $|u|\ge h$ then the identity is long and therefore falls under Case~1, so we assume $|u|<h$. Then $r:=h-|v|>0$.

Take an unbalanced letter $x$ in $u=v$ for which the number
\[
s:=\min\{\occ(x,u),\occ(x,v)\}
\]
is the least possible. Note that $s>0$ because $\con(u)=\con(v)$. By symmetry, we may assume that $s=\occ(x,u)$; then $t:=\occ(x,v)>s$. Since $|u|=|v|$, the deficit caused by $x$ occurring fewer times in~$u$ than in $v$ must be compensated by some letter $y$ that occurs more times in~$u$ than in~$v$. If $p:=\occ(y,u)$ and $q:=\occ(y,v)$, we have $p>q$, and $q\ge s$ by the minimality of $s$. Since the identity $u=v$ is $d$-balanced, we have $s\equiv t\pmod{d}$, whence $t\ge s+d$. We have
\[
|v|\ge|x{\,^t}y^{\,q}|=t+q\ge (s+d)+s=2s+d.
\]
The definition of a uniform identity and the choice of $s$ ensure the inequality $|v|\ge h-s$, whence $s\ge h-|v|=r$. Thus,
\[
h-r=|v|\ge 2s+d\ge 2r+d,
\]
and therefore, $r\le\frac{h-d}3$. This inequality makes $r$ a valid parameter for the identity $\Psi_{h,d,r}$, which we will use from now on.

Using commutativity, we can permute letters in each of the words $u$ and $v$ to obtain an identity of the form
\[
x^{\,s}y^{\,p}u'=x^{\,r+d}y^{\,r}x^{\,t-r-d}y^{\,q-r}v',
\]
equivalent to the identity $u=v$ modulo \eqref{eq:com}. (The inequalities $q\ge s\ge r$ and $t\ge s+d$ established in the preceding paragraph justify the expressions $x^{\,t-r-d}$ and $y^{\,q-r}$.) Permuting letters does not change word length, so $|x^{\,r+d}y^{\,r}x^{\,t-r-d}y^{\,q-r}v'|=|v|=h-r$. Therefore, the identity $\Psi_{h,d,r}$ can be applied to the word $x^{\,r+d}y^{\,r}x^{\,t-r-d}y^{\,q-r}v'$, transforming it to $x^{\,r}y^{\,r+d}x^{\,t-r-d}y^{\,q-r}v'$. Thus,  we obtain that the identity
\begin{equation}\label{eq:case2step1}
x^{\,s}y^{\,p}u'=x^{\,r}y^{\,r+d}x^{\,t-r-d}y^{\,q-r}v'
\end{equation}
is equivalent to the identity $u=v$ modulo \eqref{eq:com} and $\Psi_{h,d,r}$. If $s=t-d$, the letter $x$ becomes balanced in \eqref{eq:case2step1}  and so do all letters that were already balanced in $u=v$. Thus, \eqref{eq:case2step1} has fewer unbalanced letters and falls under the induction assumption.

Suppose that $s\ne t-d$, that is, $t-s-d\ne 0$. Recall that  $s\equiv t\pmod{d}$ and $t\ge s+d$. Therefore, if $t-s-d\ne 0$, then $t-s-d$ is a positive  multiple of $d$, whence $t-s-d\ge d$. In this case, we can rewrite \eqref{eq:case2step1} as
\[
x^{\,s}y^{\,p}u'=x^{\,r+d}y^{\,r}x^{\,t-r-2d}y^{\,q-r+d}v'
\]
and apply the identity $\Psi_{h,d,r}$ to the right-hand side. This leads to the identity
\[
x^{\,s}y^{\,p}u'=x^{\,r}y^{\,r+d}x^{\,t-r-2d}y^{\,q-r+d}v',
\]
still equivalent to the identity $u=v$ modulo \eqref{eq:com} and $\Psi_{h,d,r}$. Repeating this transformation, we eventually reach an identity equivalent to $u=v$ modulo \eqref{eq:com} and $\Psi_{h,d,r}$ but having fewer unbalanced letters, and then we use the induction assumption.
\end{proof}

\subsection*{Acknowledgement} The author thanks the anonymous referee for feedback that led to an essential improvement in the proof of the main theorem (Case 2), and Edmond W.H. Lee for valuable suggestions.

\bigskip

{\small

}
\end{document}